\lbrace\begin{array}{@{}l@{}}}%
\newcommand{\ZZ}{\ensuremath{\mathbb{Z}}}
\newcommand{\parity}{\mathop{\mathrm{par}}}
\theoremstyle{definition}
\newtheorem{definizione}{Definition}[section]
\theoremstyle{plain}
\theoremstyle{plain}
\newtheorem{teorema}[definizione]{Theorem}
\theoremstyle{plain}
\newtheorem{corollarioth}[definizione]{Corollary}
\theoremstyle{plain}
\newtheorem{proposizione}[definizione]{Proposition}
\theoremstyle{plain}
\theoremstyle{plain}
\theoremstyle{remark}
\newtheorem{remark}[definizione]{Remark}
\theoremstyle{remark}
\newtheorem{example}[definizione]{Example}
\begin{document}

\frenchspacing

\title{Partition identities associated to Rogers-Ramanujan type identities}

\author{Pietro Mercuri}
\email{mercuri.ptr@gmail.com}

\thanks{The author has been supported by the research grant ``Ing. Giorgio Schirillo'' of the Istituto Nazionale di Alta Matematica ``F. Severi'', Rome.}

\date{}

\subjclass[2010]{11P81, 11P84, 05A17}
\keywords{partition identities, Rogers-Ramanujan identity, generating function}

\begin{abstract}
We show that, in many cases, there are infinitely many sets of partitions corresponding to a single analytical Rogers-Ramanujan type identity. This means that a single analytical Rogers-Ramanujan type identity implies the existence of bijections among infinitely many sets of partitions. We also give an explicit description of these infinite sets coming from the sum side of the analytical identity explaining how to interpret the sum side combinatorially as the generating function of the partitions considered. Moreover, we give a new infinite familiy of Rogers-Ramanujan type identities obtained by the Glaisher's identities.
\end{abstract}

\maketitle

\section{Introduction}

The analytical Rogers-Ramanujan type identities are identities between an infinite product and an infinite series that express the same power series in one unknown usually denoted by $q$. The name arise after the work \cite{RR19} of Rogers and Ramanujan on analytical identities of this kind. In the last few decades a renewed interest about this kind of identities has arisen since the works of Lepowsky and Wilson (\cite{LW84},\cite{LW85}) that linked some of these identities to representation theory of Lie algebras and vertex operator algebras. Their approach was developed by many authors (see for example \cite{Cap93}, \cite{Cap96}, \cite{KR19}, \cite{MP87}, \cite{MP99}, \cite{MP01}, \cite{Pri94}, \cite{PS16}, \cite{Sil17}) leading to many new interesting links and unknown Rogers-Ramanujan type identities. Both sides of many analytical Rogers-Ramanujan type identities have a simple combinatorial theoretical interpretation in terms of partitions of integers. In these cases the analytical identity implies the existence of a bijection between different sets of partitions corresponding to the two sides. See for example \cite{Cap04}, \cite{Hir79}, \cite{SA88} and \cite{Sub85}. In some cases there have been generalizations and refinements of the partition identities that arose from analytical Rogers-Ramanujan type identities. See for example \cite{AAG95}, \cite{And92}, \cite{Dou14}, \cite{Dou17}, \cite{DL18} and \cite{DL19}. On the other hand, each side of an analytical Rogers-Ramanujan type identity could have many different combinatorial interpretations. For instance, Agarwal in \cite{Aga96} shows the equinumerosity of three different sets of ($n$-color) partitions having the same generating function. In this paper we show that certain classes of generating functions admit infinitely many partition theoretical interpretations.

An analytical Rogers-Ramanujan type identity, as we said, implies the existence of bijections between the sets of partitions associated to the two sides of the identity, but a simple description of these bijections may be hard to be determined. See \cite{AAG95}, \cite{CDMV20} and \cite{GM81} as examples. Also, the opposite may happen: There is a simple description of a bijection between two sets of partitions, but it is hard to find an analytical identity of Rogers-Ramanujan type. See \cite{KR19} as an example. Glaisher in \cite{Gla83} generalized the Euler's bijection corresponding to the classical Euler's identity. He gave an infinite family of new partition identities, but we don't know of any analytical Rogers-Ramanujan type identity associated to the partition identities found by Glaisher. In this paper we show an infinite family of new analytical Rogers-Ramanujan type identities. Each analytical identity of the family is associated to one of the Glaisher's identity.

The paper has the following structure: In Section \ref{sec:ineq} we set up notation and introduce a convenient way for describing some kind of conditions on partitions. See \cite{And84} for a deeper and wider reference about the theory of integer partitions. In Section \ref{sec:results} we explain how infinitely many interpretations can be given to a single generating function of a certain type. This leads to infinite classes of partition identities. We also give some explicit examples of them. In Section~\ref{sec:bij} we show that there is a simple explicit bijection between each pair of partition sets achievable by Section~\ref{sec:results}. We use this to get a simple explicit bijection for the partition identity associated to the second classical Rogers-Ramanujan identity. The bijection given preserves the number of parts but not the weight of the partitions. In Section~\ref{sec:euler} we apply the ideas of Section~\ref{sec:results} to give a new interpretation of the classical Euler's identity. Moreover, we give an analytical Rogers-Ramanujan type identity for each Glaisher's identity providing an infinite new class of Rogers-Ramanujan type identities. In the Appendix, we express the partition identities found in \cite{Cap04}, \cite{Hir79}, \cite{SA88} and \cite{Sub85} using the notation introduced in Section \ref{sec:ineq}.

\section{Notation and chains of inequalities} \label{sec:ineq}

For $n\in\ZZ$, we define
\[
(q)_n:=\begin{cases}
\prod_{s=1}^n (1-q^s), & \text{if } n>0, \\
1, & \text{if } n\le 0.
\end{cases}
\]
Let $n,N\in\ZZ_{>0}$, we can write a generic partition of $N$ with at most $n$ parts using a \emph{chain of inequalities} as
\begin{equation}\label{eq:generic partition}
p_1 \ge p_2\ge p_3\ge \ldots\ge p_{n-1}\ge p_n \ge 0,
\end{equation}
where $N=p_1+p_2+\ldots +p_n$ and $N$ is called the \emph{weight} of the partition $(p_1,p_2,\ldots,p_n)$.
\begin{remark}\label{rem:pov}
Let $\mathcal{P}_{\le n}$ be the set of the partitions with parts not bigger than $n$ and let $\mathcal{P}_{\le n}^*$ be the set of the conjugate partitions of $\mathcal{P}_{\le n}$, i.e., partitions with at most $n$ parts. We recall that $\frac{1}{(q)_n}$ can be easily seen as the generating function of the partitions in $\mathcal{P}_{\le n}$ interpreting the coefficient of $q^s$ as the number of parts equal to $s$. Since $\#\mathcal{P}_{\le n}=\#\mathcal{P}_{\le n}^*$, then $\frac{1}{(q)_n}$ can be also seen in terms of the conjugate partitions interpreting the coefficient of $q^s$ as the difference of the $s$-th part from the $(s+1)$-th part, where we consider the $(n+1$)-th part to be $0$. In this work, we use this latter point of view.
\end{remark}
We can write a partition with exactly $n$ distinct parts using the chain of inequalities:
\begin{equation}\label{eq:distinct parts}
a_1>a_2>a_3>\ldots >a_{n-1}>a_n\ge 1.
\end{equation}
We can get the chain (\ref{eq:distinct parts}) from the chain (\ref{eq:generic partition}) by summing $\pi(s):=n-s+1$ to each $p_s$, i.e., $a_s=p_s+\pi(s)$, for $s=1,\ldots,n$.
In terms of generating functions this means multiplying by $(q^s)^{\pi(s)-\pi(s+1)}$ each factor $\frac{1}{1-q^s}$ (we add $\pi(s)$ to the $s$-th part), where $\pi(s)=0$ if $s>n$. Since
\[
\sum_{s=1}^n (s\pi(s)-s\pi(s+1))=\sum_{s=1}^n \pi(s),
\]
this is equivalent to multiply the generating function by $q^{\sum_{s=1}^n \pi(s)}$. In this way we get the known generating function of the partitions with exactly $n$ distinct parts:
\[
\prod_{s=1}^n \frac{(q^s)^{\pi(s)-\pi(s+1)}}{1-q^s}=\prod_{s=1}^n \frac{q^{\pi(s)}}{1-q^s}=\frac{q^{\sum_{s=1}^n \pi(s)}}{\prod_{s=1}^n (1-q^s)}= \frac{q^{\frac{n^2+n}{2}}}{(q)_n}.
\]
Different choices of the function $\pi(s)$ give different conditions on partitions and different generating functions. All the partition theoretic interpretations of the sum side of the analytical identity given in \cite{Hir79}, \cite{Sub85}, \cite{SA88} and in \cite{Cap04} can be described in this way. See the Appendix for more details. 

We introduce the following useful notation: Let  $a,b,r\in\ZZ_{\ge 0}$, when we write $a\ge_r b$ we mean $a-b\ge r$. Therefore, $a\ge_0 b$ means $a \ge b$ and $a\ge_1 b$ means $a > b$. With this notation we can write the chains  (\ref{eq:generic partition}) and (\ref{eq:distinct parts}) as
\begin{align*}
&p_1 \ge_0 p_2\ge_0 \ldots \ge_0 p_{n-1} \ge_0 p_n \ge_0 0, \\
&a_1\ge_1 a_2\ge_1\ldots \ge_1 a_{n-1}\ge_1 a_n\ge_0 1.
\end{align*}
We also define the \emph{parity function} on $\ZZ$:
\[
\parity(m):=\begin{cases}
0, & \text{ if $m$ is even,} \\
1, & \text{ if $m$ is odd.}
\end{cases}
\]

\section{Interpretations of generating functions and partition theorems}\label{sec:results}

We start this section with a partition theoretic interpretation of a quite common kind of generating function.
\begin{proposizione}\label{prop:count}
Let $S, \pi$ and $u$ be functions from $ \ZZ_{\ge 0}$ to $ \ZZ_{\ge 0}$ such that $\pi$ is weakly decreasing and $\sum_{s=1}^{u(n)}\pi(s)=S(n)$. Then
\[
f(q)= \sum_{n=0}^{\infty}\frac{q^{S(n)}}{(q)_{u(n)}},
\]
is the generating function of the partitions satisfying
\[
a_1 \ge_{\pi(1)-\pi(2)} \ldots \ge_{\pi(s-1)-\pi(s)} a_s \ge_{\pi(s)-\pi(s+1)} a_{s+1} \ge_{\pi(s+1)-\pi(s+2)} \ldots \ge_{\pi(n-1)-\pi(n)} a_{u(n)}\ge_0 \pi(n).
\]
\end{proposizione}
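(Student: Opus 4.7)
My plan would be to reduce the statement to the shift argument already applied in Section~\ref{sec:ineq} for the distinct-parts case $\pi(s)=n-s+1$, extending it to a general weakly decreasing $\pi$, and then to sum over $n$. The two ingredients actually used there---that $\pi$ is weakly decreasing (so that all gaps $\pi(s)-\pi(s+1)$ are non-negative integers and the chain makes sense) and that the resulting weight shift has a closed form---are both still in force by hypothesis.

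Concretely, I would fix $n\in\ZZ_{\ge 0}$ and analyze one summand at a time. By Remark~\ref{rem:pov}, the factor $1/(q)_{u(n)}$ is the generating function of the partitions
\[
p_1\ge_0 p_2\ge_0 \ldots \ge_0 p_{u(n)}\ge_0 0,
\]
read in the conjugate-partition convention, with weight $\sum_{s=1}^{u(n)} p_s$. I would then apply the substitution $a_s:=p_s+\pi(s)$ for $s=1,\ldots,u(n)$. This is manifestly a bijection, and it transforms the chain of inequalities on $(p_s)$ into
\[
a_s - a_{s+1} = (p_s - p_{s+1}) + (\pi(s)-\pi(s+1)) \ge \pi(s)-\pi(s+1),
\]
for $s<u(n)$, together with $a_{u(n)}\ge_0 \pi(u(n))$, which is exactly the chain appearing in the proposition.

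It remains to track the weight. Since
\[
\sum_{s=1}^{u(n)} a_s = \sum_{s=1}^{u(n)} p_s + \sum_{s=1}^{u(n)} \pi(s) = \sum_{s=1}^{u(n)} p_s + S(n),
\]
multiplying the generating function by $q^{S(n)}$ accounts for the shift and yields $q^{S(n)}/(q)_{u(n)}$ as the generating function of the partitions satisfying the prescribed chain. Summing over $n\in\ZZ_{\ge 0}$ then gives $f(q)$. The main obstacle, if it is one, is only bookkeeping: confirming that the bottom inequality $p_{u(n)}\ge_0 0$ really transforms to $a_{u(n)}\ge_0 \pi(u(n))$, and that the weight shift is exactly $S(n)$ with no extra telescoping term---both follow directly from the hypotheses and from the fact that, in the direct (non-conjugate) formulation, adding $\pi(s)$ to each part produces the weight increase $\sum_s \pi(s)$ by definition.
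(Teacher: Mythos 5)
Your proposal is correct and is essentially the paper's own argument: the paper's proof consists of the single factorization $q^{S(n)}/(q)_{u(n)}=\prod_{s=1}^{u(n)} q^{\pi(s)}/(1-q^s)$, which encodes exactly the shift $a_s=p_s+\pi(s)$ and the weight bookkeeping $\sum_s\pi(s)=S(n)$ that you spell out. You have merely made explicit the details the paper delegates to Section~\ref{sec:ineq} and Remark~\ref{rem:pov}.
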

\begin{proof}
The proposition follows observing that
\[
\frac{q^{S(n)}}{(q)_{u(n)}}= \frac{q^{\sum_{s=1}^{u(n)} \pi(s)}}{\prod_{s=1}^{u(n)} (1-q^s)}=\prod_{s=1}^{u(n)} \frac{q^{\pi(s)}}{1-q^s}
\]
counts the partitions with at most $u(n)$ parts satisfying the desired chain of inequalities.
\end{proof}
\begin{example}
We consider, for example, the following generating function
\[
f(q)=\sum_{n=0}^{\infty} \frac{q^{n^2+2n}}{(q)_{2n}}.
\]
Here $S(n)=n^2+2n$ and $u(n)=2n$. When $n=3$, we have $S(3)=15$ and $u(3)=6$. The reader can check that the coefficient of $q^{18}$ is $3$, so we expect $3$ partitions satisfying the corresponding inequalities for each choice of the function $\pi$. In \cite{Hir79} the choice for $\pi$ is
\[
\pi(s)=\tfrac{1}{2}(2n+2+\parity(s)-s).
\]
This choice corresponds to count partitions with exactly $2n$ parts such that
\[
a_1 \ge_1 a_2 \ge_0 a_3 \ge_1 a_4 \ge_0 \ldots \ge_1 a_{2n}\ge_0 1.
\]
When $N=18$ and $n=3$ we have only the partitions:
\begin{align*}
 (7, 3, 3, 2, 2, 1), \qquad (6, 4, 3, 2, 2, 1), \qquad (5, 4, 4, 2, 2, 1),
\end{align*}
that satisfy
\[
a_1 \ge_1 a_2 \ge_0 a_3 \ge_1 a_4 \ge_0 a_5 \ge_1 a_6\ge_0 1.
\]
But we can choose $\pi$ differently, for instance we can choose
\[
\pi(s)=\begin{cases}
n^2+1, & \text{ if } s=1, \\
1, & \text{ if } s\ne 1.
\end{cases}
\]
This choice corresponds to count partitions with exactly $2n$ parts such that
\[
a_1 \ge_{n^2} a_2 \ge_0 a_3 \ge_0 a_4 \ge_0 \ldots \ge_0 a_{2n}\ge_0 1.
\]
When $N=18$ and $n=3$ we have only the partitions:
\begin{align*}
 (13, 1, 1, 1, 1, 1), \qquad (12, 2, 1, 1, 1, 1), \qquad (11, 2, 2, 1, 1, 1),
\end{align*}
that satisfy
\[
a_1 \ge_9 a_2 \ge_0 a_3 \ge_0 a_4 \ge_0 a_5 \ge_0 a_6\ge_0 1.
\]
Alternatively, we can choose
\[
\pi(s)=\begin{cases}
n^2+2n, & \text{ if } s=1, \\
0, & \text{ if } s\ne 1,
\end{cases}
\]
and this choice corresponds to count partitions with at most $2n$ parts such that
\[
a_1 \ge_{n^2+2n} a_2 \ge_0 a_3 \ge_0 a_4 \ge_0 \ldots \ge_0 a_{2n}\ge_0 0.
\]
Again, when $N=18$ and $n=3$ we have only the partitions:
\begin{align*}
 (18), \qquad  (17, 1), \qquad  (16, 1, 1),
\end{align*}
that satisfy
\[
a_1 \ge_{15} a_2 \ge_0 a_3 \ge_0 a_4 \ge_0 a_5 \ge_0 a_6\ge_0 0.
\]
\end{example}
We can move one step further.
Up to now, we just took the same $\pi$ for each $n$, but we can actually let $\pi$ vary in function of $n$. This leads us to the following theorem.
\begin{teorema}
Let $S\colon \ZZ_{\ge 0}\to \ZZ_{\ge 0}$ and $u\colon \ZZ_{\ge 0}\to \ZZ_{\ge 0}$ be functions. Let $\pi\colon \ZZ_{\ge 0}^2\to \ZZ_{\ge 0}$  be a two variable function that is weakly decreasing in the second variable and $\sum_{s=1}^{u(n)}\pi(n,s)=S(n)$. Then
\[
f(q)= \sum_{n=0}^{\infty}\frac{q^{S(n)}}{(q)_{u(n)}},
\]
is the generating function of the partitions satisfying
\[
a_1 \ge_{\pi(n,1)-\pi(n,2)} \ldots \ge_{\pi(n,s-1)-\pi(n,s)} a_s \ge_{\pi(n,s)-\pi(n,s+1)} \ldots \ge_{\pi(n,n-1)-\pi(n,n)} a_{u(n)}\ge_0 \pi(n,n).
\]
\end{teorema}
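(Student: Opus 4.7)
The plan is to mirror the proof of Proposition~\ref{prop:count} summand by summand. For each fixed $n \ge 0$, the slice $\pi_n(s) := \pi(n,s)$ is a one-variable weakly decreasing function satisfying $\sum_{s=1}^{u(n)} \pi_n(s) = S(n)$, so the hypotheses of Proposition~\ref{prop:count} are fulfilled with $\pi$ replaced by $\pi_n$ inside each term of the sum defining $f(q)$. Allowing $\pi$ to depend on $n$ changes nothing inside a single summand because the algebra there only ever involves $\pi(n,s)$ for a fixed value of $n$.

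Concretely, for fixed $n$ I would rewrite
\[
\frac{q^{S(n)}}{(q)_{u(n)}} = \frac{q^{\sum_{s=1}^{u(n)} \pi(n,s)}}{\prod_{s=1}^{u(n)}(1-q^s)} = \prod_{s=1}^{u(n)} \frac{q^{\pi(n,s)}}{1-q^s},
\]
exactly as in Proposition~\ref{prop:count}. The construction described at the start of Section~\ref{sec:ineq}, where each factor $\frac{1}{1-q^s}$ is multiplied by $q^{\pi(n,s)-\pi(n,s+1)}$ and the exponents telescope to $q^{\sum_s \pi(n,s)}$, then identifies this product with the generating function of partitions with at most $u(n)$ parts obeying the chain
\[
a_1 \ge_{\pi(n,1)-\pi(n,2)} \cdots \ge_{\pi(n,u(n)-1)-\pi(n,u(n))} a_{u(n)} \ge_0 \pi(n,u(n)).
\]
Summing over $n$ then expresses $f(q)$ as the generating function of the $n$-indexed family of partition sets cut out by these chains, which is the statement of the theorem.

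The main point to be careful about is definitional rather than technical: the chain depends on $n$, so the set of partitions referred to genuinely varies with the summation index. The coefficient of $q^N$ in $f(q)$ should be read as the sum over $n$ of the number of weight-$N$ partitions satisfying the $n$-th chain. Once this interpretation is adopted (consistently with the examples following Proposition~\ref{prop:count}), the argument is a direct generalization of Proposition~\ref{prop:count} with no new algebraic input required.
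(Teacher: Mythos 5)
Your proposal matches the paper's treatment exactly: the paper simply declares the theorem ``an immediate generalization of Proposition~\ref{prop:count},'' and your summand-by-summand application of that proposition with $\pi_n(s):=\pi(n,s)$ is precisely the intended argument, spelled out in slightly more detail. Your closing remark about reading the coefficient of $q^N$ as a sum over $n$ of counts for the $n$-th chain is a correct and worthwhile clarification, consistent with the examples in the paper.
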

This theorem is an immediate generalization of Proposition \ref{prop:count}. As a direct consequence we have the following corollary.
\begin{corollarioth}
Let $S\colon  \ZZ_{\ge 0}\to  \ZZ_{\ge 0}$ and $u\colon  \ZZ_{\ge 0} \to  \ZZ_{\ge 0}$ be functions. Each Rogers-Ramanujuan identity of the following type
\[
\prod_{\substack{k \text{ subjects to some} \\ \text{congruence conditions}}}\frac{1}{1-q^k}= \sum_{n=0}^{\infty}\frac{q^{S(n)}}{(q)_{u(n)}},
\]
gives infinitely many partition identities. 
\end{corollarioth}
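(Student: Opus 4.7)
The plan is to deduce the corollary directly from the preceding theorem by exhibiting infinitely many admissible choices of the two-variable function $\pi$, each producing a distinct partition-theoretic interpretation of the sum side. Since the product side already has the standard interpretation as the generating function for partitions whose parts satisfy the prescribed congruence conditions, equating it with each of these interpretations of the sum side yields a separate partition identity.

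Concretely, I would proceed as follows. First, I would fix the given analytical Rogers-Ramanujan type identity and recall that the left-hand side enumerates partitions into parts restricted by the congruence conditions (this is the classical Euler-type reading of the infinite product). Second, I would appeal to the theorem above: for any two-variable function $\pi\colon \ZZ_{\ge 0}^2 \to \ZZ_{\ge 0}$ that is weakly decreasing in the second variable and satisfies $\sum_{s=1}^{u(n)}\pi(n,s)=S(n)$, the right-hand side is the generating function of the partitions satisfying the associated chain of inequalities. Third, I would exhibit an infinite family of such $\pi$'s; for instance, for each integer $m \ge 0$ one can set
\[
\pi_m(n,s) := \begin{cases} S(n)-m, & s=1, \\ 0, & 2 \le s \le u(n), \end{cases}
\]
whenever $S(n) \ge m$ (and adjust trivially on small $n$), or, in order to produce genuinely different chains of inequalities, split the mass $S(n)$ among the first few indices in infinitely many distinct weakly decreasing patterns. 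Each such $\pi_m$ produces a different set $\mathcal{A}_m$ of partitions whose generating function coincides with the sum side. Equating this with the product side, I obtain one partition identity per value of $m$.

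Finally, I would observe that different choices of $\pi$ give different chains of inequalities and hence different sets of partitions (this can be seen by exhibiting a specific partition lying in one $\mathcal{A}_m$ but not in another, for example by looking at the allowed gap between the two largest parts). This guarantees that the resulting family of partition identities is genuinely infinite rather than a reshuffling of finitely many statements.

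The only delicate point is confirming that the infinitely many $\pi$'s really give combinatorially distinct identities rather than cosmetic restatements of the same one. I expect this to be easy in practice, since the gap $\pi(n,1)-\pi(n,2)$ between the two largest parts is already a numerical invariant that depends on $\pi$, but it is the step worth pausing on. The rest of the argument is purely formal: the theorem does all the heavy lifting, and the corollary is essentially the observation that the space of admissible $\pi$'s is infinite.
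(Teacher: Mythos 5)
Your proposal takes essentially the same route as the paper, whose entire proof is the one-line observation that there are infinitely many admissible choices of $\pi(n,s)$; your elaboration on distinctness of the resulting identities is a reasonable (and welcome) expansion of that remark. One slip: your displayed family $\pi_m$ violates the normalization $\sum_{s=1}^{u(n)}\pi_m(n,s)=S(n)$ whenever $m>0$ (the sum is $S(n)-m$), so that particular family does not interpret the given sum side; your fallback of splitting the mass $S(n)$ among the first few indices in infinitely many weakly decreasing patterns is the correct fix and is all that is needed.
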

\begin{proof}
This follows from the infinitely many possible choices for the function $\pi(n,s)$.
\end{proof}
We give, as an example, some new interpretations of the second classical Rogers-Ramanujan identity:
\begin{equation}\label{RRid}
\prod_{k\equiv 2,3\bmod 5}\frac{1}{1-q^k}= \sum_{n=0}^{\infty}\frac{q^{n^2+n}}{(q)_{n}}.
\end{equation}
\begin{teorema}
The following sets of partitions have the same numbers of elements:
\begin{align*}
P_1&:=\{\text{Partitions with parts congruent to $2$ or $3$ modulo $5$}\}; \\
P_2&:=\{\text{Partitions with parts greater than $1$ and with difference between adjacent parts at least $2$}\}= \\
&=\{\text{Partitions satisfying } a_1 \ge_2 a_2 \ge_2 \ldots \ge_2 a_{n}\ge_0 2\}; \\
P_3&:=\{\text{Partitions satisfying } a_1 \ge_{n^2} a_2 \ge_0 \ldots \ge_0 a_{n}\ge_0 1\}; \\
P_4&:=\{\text{Partitions satisfying } a_1 \ge_{n^2+n} a_2 \ge_0 \ldots \ge_0 a_{n}\ge_0 0\}; \\
P_5&:=\{\text{Partitions satisfying } a_1 \ge_0 a_2 \ge_0 \ldots \ge_0 a_{n}\ge_0 n+1\}.
\end{align*}
\end{teorema}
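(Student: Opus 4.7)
The plan is to treat this as a direct application of the theorem immediately preceding the statement, using the second Rogers--Ramanujan identity~(\ref{RRid}) as the fixed analytical identity and then choosing four different functions $\pi(n,s)$, one for each of $P_2,P_3,P_4,P_5$. The set $P_1$ is identified with the coefficients of the product side of~(\ref{RRid}) by the standard product-to-partition interpretation ($\frac{1}{1-q^k}$ generates parts equal to $k$). Since both sides of~(\ref{RRid}) represent the same power series, it suffices to exhibit, for each $P_i$ with $i\ge 2$, a choice of $\pi(n,s)$ for which the hypotheses of the theorem hold (weakly decreasing in $s$ and $\sum_{s=1}^{u(n)}\pi(n,s)=S(n)$) and for which the associated chain of inequalities is the one defining $P_i$. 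Here $S(n)=n^2+n$ and $u(n)=n$.

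The four choices I would use are as follows. For $P_2$, take $\pi(n,s)=2(n-s+1)$: this is strictly decreasing in $s$, gives constant step $\pi(n,s)-\pi(n,s+1)=2$ and base $\pi(n,n)=2$, and a short calculation yields $\sum_{s=1}^{n}2(n-s+1)=n^2+n$. For $P_3$, take $\pi(n,1)=n^2+1$ and $\pi(n,s)=1$ for $s\ge 2$: this is weakly decreasing, the only nonzero gap is $\pi(n,1)-\pi(n,2)=n^2$, the base is $\pi(n,n)=1$, and the sum is $(n^2+1)+(n-1)=n^2+n$. For $P_4$, take $\pi(n,1)=n^2+n$ and $\pi(n,s)=0$ for $s\ge 2$: the single gap is $n^2+n$, the base is $0$, and the sum is $n^2+n$. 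For $P_5$, take $\pi(n,s)=n+1$ constantly: all gaps vanish, the base is $n+1$, and the sum is $n(n+1)=n^2+n$.

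In each case the theorem then produces the chain of inequalities describing the corresponding $P_i$, and the four generating functions so obtained all equal $\sum_{n\ge 0}q^{n^2+n}/(q)_n$, which equals the generating function of $P_1$ by~(\ref{RRid}). Matching coefficients of $q^N$ on both sides of the resulting equalities yields $\#(P_1)_N=\#(P_i)_N$ for every weight $N$ and every $i\in\{2,3,4,5\}$.

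There is essentially no hard step: the only thing to check for each choice is the pair of bookkeeping conditions (monotonicity in $s$ and the summation identity), and these are immediate from the definitions. The only point requiring a sentence of care is that in $P_3$, $P_4$, $P_5$ the integer $n$ appearing in the inequalities is the summation index (equivalently, the number of parts used by that term of the series), not a fixed constant — this is exactly the strengthening provided by letting $\pi$ depend on $n$, which is the content of the theorem as opposed to Proposition~\ref{prop:count}.
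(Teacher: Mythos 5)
Your proposal is correct and follows exactly the paper's own argument: the same four choices of $\pi$ (namely $2(n+1-s)$ for $P_2$, the $n^2+1,1,\ldots,1$ choice for $P_3$, the $n^2+n,0,\ldots,0$ choice for $P_4$, and the constant $n+1$ for $P_5$), each verified against $S(n)=n^2+n$ and $u(n)=n$ and combined with the product-side interpretation of $P_1$ via identity~(\ref{RRid}). The only difference is that you spell out the summation and monotonicity checks, which the paper leaves implicit.
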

\begin{proof}
The set $P_1$ corresponds to the classical interpretation of the product side, the set $P_2$ corresponds to the classical interpretation of the sum side and corresponds, with the notation of Proposition \ref{prop:count}, to the choice $\pi(s)=2(n+1-s),$ for $s=1,\ldots,n.$ \\
The set $P_3$ corresponds to the choice $\pi(s)=\begin{cases}
n^2+1, & \text{ if } s=1, \\
1, & \text{ if } s=2,\ldots,n.
\end{cases}$ \\
The set $P_4$ corresponds to the choice $\pi(s)=\begin{cases}
n^2+n, & \text{ if } s=1, \\
0, & \text{ if } s=2,\ldots,n.
\end{cases}$ \\
The set $P_5$ corresponds to the choice $\pi(s)=n+1,$ for $s=1,\ldots,n.$
\end{proof}
In the previous theorem, we chose just some sets that could be considered "simple" or "natural" among the infinitely many possibilities. The set $P_3$ corresponds to choose the function $\pi$ so to have \emph{exactly} $n$ parts with only one further restriction on the biggest part. The set $P_4$ corresponds to choose the function $\pi$ so to have \emph{at most} $n$ parts with only one further restriction on the biggest part. The set $P_5$ corresponds to choose the function $\pi$ so to have exactly $n$ parts with the least part as biggest as possible. This last choice can be visualized by decomposing an oblong number $n^2+n$ with respect to its $n$ rows made up of $n+1$ elements.

\section{Explicit bijections}\label{sec:bij}

Given a generating function $\sum_{n=0}^{\infty}\frac{q^{S(n)}}{(q)_{u(n)}}$, there is a simple way to obtain a bijection between two sets of partitions corresponding to two different choices of $\pi(n,s)$. Let $(a_1,\ldots,a_{u(n)})$ be a partition corresponding to the choice $\pi_a(n,s)$ and let $(b_1,\ldots,b_{u(n)})$ be a partition corresponding to the choice $\pi_b(n,s)$. Then we have the bijection $B$ given by
\begin{equation}\label{eq:bij}
B(a_s)=a_s-\pi_a(n,s)+\pi_b(n,s)=b_s, \quad \text{ for }s=1,\ldots,u(n).
\end{equation}
This is because $a_s=p_s+\pi_a(n,s)$ and $b_s=p_s+\pi_b(n,s)$, where $p_s$ denote the generic "basic" partition as explained in Section \ref{sec:ineq}.

As an application we show a simple bijection for the second classical Rogers-Ramanujan identity (see Equation \ref{RRid}) that preserves the number of parts (but not the weight) of the partitions. If $(a_1,\ldots,a_n)$ is a partition that satisfies the product side conditions, then we want a partition $(b_1,\ldots,b_n)$ that satisfies the sum side classical conditions. We obtain this in two steps. First we give a bijection between a partition $(a_1,\ldots,a_n)$ that satisfies the product side conditions and a partition $(c_1,\ldots,c_n)$ that satisfies the sum side condition associated to the choice
\begin{align*}
\pi_1(s)&:=\begin{cases}
n^2+1, & \text{ if } s=1, \\
1, & \text{ if } s=2,\ldots,n,
\end{cases}
\end{align*}
i.e., $(c_1,\ldots,c_n)$ is a partition that satisfies the following chain condition:
\[
c_1 \ge_{n^2} c_2\ge_0 \ldots \ge_0 c_n\ge_0 1.
\]
This first bijection is given by
\[
c_s=C(a_s):=a_s-3q_s-1+n^2\delta(1,s), \quad \text{ for }s=1,\ldots,n,
\]
where $\delta$ is the Kronecker delta and $q_s:=\left\lfloor\frac{a_s}{5} \right\rfloor$. The second step goes from a partition $(c_1,\ldots,c_n)$ that satisfies the sum side condition associated to $\pi_1(s)$ to a partition $(b_1,\ldots,b_n)$ that satisfies the sum side condition associated to $\pi_c(s)$, where
\begin{align*}
\pi_c(s):=2(n+1-s), \quad s=1,\ldots,n,
\end{align*}
i.e., $(b_1,\ldots,b_n)$ satisfies the chain condition
\[
b_1 \ge_2 b_2\ge_2 \ldots \ge_2 b_n\ge_0 2,
\]
that represents the classical Rogers-Ramanujan conditions. This can be done using Equation~(\ref{eq:bij}):
\[
b_s=B(c_s):=c_s-\pi_1(n,s)+\pi_c(n,s), \quad \text{ for }s=1,\ldots,n.
\]
The composition $I:=B\circ C$ of these two bijections gives the desired one:
\[
b_s=I(a_s):=a_s-3q_s-1+n^2\delta(1,s)-\pi_1(s)+\pi_c(s), \quad \text{ for }s=1,\ldots,n.
\]
The inverse function is given by
\[
a_s=I^{-1}(b_s)=b_s+3k_s+1-n^2\delta(1,s)-\pi_c(s)+\pi_1(s), \quad \text{ for }s=1,\ldots,n,
\]
where $k_s:=\left\lfloor\frac{b_s-1-n^2\delta(1,s)-\pi_c(s)+\pi_1(s)}{2} \right\rfloor=q_s$. We leave to the reader the verification of the details. Both the partitions $(a_1,\ldots,a_n)$ and $(b_1,\ldots,b_n)$ have $n$ parts, but if $N_a$ is the weight of the former partition and $N_b$ is the weight of the latter partition, their relation is
\[
N_b=N_a+n^2-\sum_{s=1}^n(3q_s+1).
\]

\section{Glaisher's identities}\label{sec:euler}

There is a simple explicit bijection between the set of the partitions in distinct parts of a positive integer $N$, that we denote by $\mathcal{D}_N$, and the set of the partitions of $N$ into odd parts, that we denote by $\mathcal{C}_N$. Now we describe the bijection $b\colon \mathcal{D}_N\to \mathcal{C}_N$ given by Euler. Let $\lambda\in \mathcal{D}_N$, as first step we divide by $2$ each even part, then we repeat this step on the resulting partition until all the parts are odd. For example let $N=20$ and $\lambda=(7,6,4,2,1)\in \mathcal{D}_N$, then at the first step we get $(7,3,3,2,2,1,1,1)$. Then, since we still have even parts, we repeat this step obtaining $(7,3,3,1,1,1,1,1,1,1)$. Since we have only odd parts, we are done and $b(\lambda)=(7,3,3,1,1,1,1,1,1,1)$. The inverse map $b^{-1}$ is given, for $\mu\in \mathcal{C}_N$, first pairing the equal parts and summing the elements of each pair, and then  iterating this process on the resulting partition until there are no more equal parts.
For example if $N=20$ and $\mu=(7,3,3,1,1,1,1,1,1,1)$, we sum up pairs of equal parts obtaining $(7,6,2,2,2,1)$. Since we still have pairs of equal parts, we repeat the process and we get $(7,6,4,2,1)$. Now we stop because we have not any pair of equal parts, so $b^{-1}(\mu)=(7,6,4,2,1)$. The Euler's identity:
\[
\prod_{k\equiv 1\bmod 2}\frac{1}{1-q^k}= \sum_{n=0}^{\infty}\frac{q^{\frac{n^2+n}{2}}}{(q)_{n}},
\]
follows by the bijection $b$ described above once checked that the two sides are the generating functions of the two sets $\mathcal{C}_N$ and $\mathcal{D}_N$.
\begin{remark}
We can use the point of view of Section \ref{sec:results} to interpret the right-hand side as the generating function of the set $\mathcal{D}_N$ by choosing the function $\pi_E(s):=n+1-s$, for $s=1,\ldots,n$, or as the generating function of infinitely many other suitable sets. For example we can observe that the exponent $\frac{1}{2}(n^2+n)$ represents the triangular numbers and the choice of $\pi(s)=\pi_E(s)$, can be viewed as the sum of the rows of the triangle associated to the triangular number. But if we take $\pi(s)=\pi_A(s)$, where
\[
\pi_A(s):=\begin{cases}
(n+2-2s)(n+1-2s), & \text{ if } s=1,\ldots,\left\lfloor\frac{n}{2}\right\rfloor, \\
1, & \text{ if $n$ is odd and } s=\frac{n+1}{2}, \\
0, & \text{ if } s=\left\lfloor\frac{n+1}{2}\right\rfloor+1,\ldots,n,
\end{cases}
\]
it corresponds to the sum of each "upper layer" of the triangle (see Figure~\ref{fig:tr}), taking zero for the remaining parts not belonging to any layer.
\end{remark}
\begin{figure}[h!]
\centering
\begin{tikzpicture}[x=1cm,y=0.4cm]
\draw[dashed] (0.15,-1.5)--(2,6);
\draw[dashed] (2,6)--(3.8,-1.5);
\draw[dashed] (-0.6,-0.2)--(2,10);
\draw[dashed] (2,10)--(4.6,-0.4);
\draw[rotate=50,dashed] (-0.5,1)  arc (90:290:0.4cm and 0.45cm); 
\draw[rotate=-60,dashed] (2.45,7.5)  arc (270:460:0.4cm and 0.45cm); 
\foreach \Point in {(0,0), (1,0), (2,0),  (3,0),  (4,0), (0.5,2), (1.5,2),  (2.5,2),  (3.5,2), (1,4), (2,4), (3,4), (1.5,6), (2.5,6), (2,8)}{    \node at \Point {\textbullet};}
\end{tikzpicture}
\caption{First upper layer for $N=20$.} \label{fig:tr}
\end{figure}
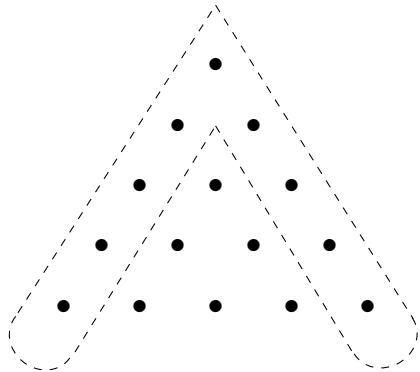
\begin{example}
Let $N=20$ and $\lambda=(7,6,4,2,1)$ which, in the classical bijection of Euler corresponds to $(7,3,3,1,1,1,1,1,1,1)$. Subtract from $\lambda$ the staircase $(5,4,3,2,1)$  to get $(2,2,1,0,0)$. Now add the "layers" of the triangle: $(9,5,1,0,0)$. The result is $(11,7,2,0,0)$. So the partition $(11,7,2)$ corresponds to $(7,3,3,1,1,1,1,1,1,1)$. We do not see a natural way for obtaining a bijection between these new types of partitions. 
\end{example}
The Euler's bijection has been generalized by Glaisher, in \cite{Gla83}, to a generic integer greater than 1. In the following theorem we recall the result of Glaisher and then we show corresponding Rogers-Ramanujan type identities.
\begin{teorema}
Let $M\in\ZZ_{\ge 2}$, let $\mathcal{D}_N(M)$ be the set of the partitions of $N$ in parts such that each part is repeated strictly less than $M$ times and let $\mathcal{C}_N(M)$ be the set of the partitions of $N$ with parts not congruent to $0$ modulo $M$. 
\begin{enumerate}
\item\label{eulergenbij} Let $b_M\colon \mathcal{D}_N(M)\to \mathcal{C}_N(M)$ be defined as follows: for every $\lambda\in \mathcal{D}_N(M)$, we divide by $M$ each part of $\lambda$ divisible by $M$ and we iterate this step until all the resulting parts are not divisible by $M$ anymore. Then $b_M$ is a bijection and its inverse $b_M^{-1}$ is given, for $\mu\in \mathcal{C}_N(M)$, summing up all the $M$-tuples of equal parts of $\mu$ and iterating this process until there are no more $M$ equal parts. 
\item\label{eulergenanid} The following identity holds:
\[
\prod_{k\nequiv 0\bmod M}\frac{1}{1-q^k}= 1+ \sum_{n=1}^{\infty}\frac{(q^{n}-q^{nM})(q^M)_{n-1}}{(q)_n}.
\]
\end{enumerate}
\end{teorema}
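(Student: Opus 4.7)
The proof splits naturally into the bijective statement (1) and the analytic identity (2), which I would prove in that order.

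For (1), I would follow the classical Glaisher argument via base-$M$ expansions of multiplicities. Given $\mu\in\mathcal{C}_N(M)$, for each $M$-free positive integer $w$ expand the multiplicity $n_w$ of $w$ in $\mu$ in base $M$ as $n_w=\sum_{i\ge 0} c_i(w) M^i$ with $0\le c_i(w)<M$, and form the partition in which $M^i w$ occurs $c_i(w)$ times; this lies in $\mathcal{D}_N(M)$ because each digit is strictly below $M$. Uniqueness of the base-$M$ expansion makes this a bijection, and a short induction on the number of iterations shows that the iterative recipes $b_M$ and $b_M^{-1}$ given in the statement realise exactly this map and its inverse. Termination of $b_M$ follows because each pass strictly decreases the largest part divisible by $M$; termination of $b_M^{-1}$ follows because each pass strictly decreases the total number of parts.

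For (2), the plan is to reduce both sides to $\prod_{s\ge 1}\frac{1-q^{Ms}}{1-q^s}$. The product side is immediate: splitting the product $\prod_{k\ge 1}(1-q^k)^{-1}$ according to whether $M\mid k$ and cancelling the $M$-multiples against $\prod_{s\ge 1}(1-q^{Ms})$ yields
$$
\prod_{k\not\equiv 0\bmod M}\frac{1}{1-q^k}\;=\;\prod_{s\ge 1}\frac{1-q^{Ms}}{1-q^s}.
$$
Equivalently, by part (1) this is the generating function of $\mathcal{D}_N(M)$, which factors as $\prod_{k\ge 1}(1+q^k+\cdots+q^{(M-1)k})$.

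The key observation for the sum side is that the summand telescopes. A one-line manipulation yields
$$
\frac{(q^M)_n}{(q)_n}-\frac{(q^M)_{n-1}}{(q)_{n-1}}\;=\;\frac{(q^M)_{n-1}}{(q)_n}\bigl[(1-q^{Mn})-(1-q^n)\bigr]\;=\;\frac{(q^n-q^{nM})(q^M)_{n-1}}{(q)_n},
$$
so the $N$-th partial sum collapses to $\frac{(q^M)_N}{(q)_N}=\prod_{s=1}^{N}\frac{1-q^{Ms}}{1-q^s}$, and letting $N\to\infty$ in the ring of formal power series (each fixed-degree coefficient stabilises for $N$ large) produces the infinite product computed above. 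The main obstacle is really just spotting the correct antiderivative $\frac{(q^M)_n}{(q)_n}$ for the telescope; once guessed the verification is a one-line cancellation, and part (1) is classical bookkeeping.
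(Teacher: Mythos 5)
Your proposal is correct, but for part (2) it takes a genuinely different route from the paper. The paper does not telescope: it first observes that the conjugates of the partitions in $\mathcal{D}_N(M)$ are exactly the partitions whose successive differences lie in $\{0,\ldots,M-1\}$ and whose least part lies in $\{1,\ldots,M-1\}$, and then proves that the $n$-th summand $\frac{(q^{n}-q^{nM})(q^M)_{n-1}}{(q)_n}$ is the generating function of such partitions with exactly $n$ parts. This is done by the functional-equation method: writing $f(z,q)=\sum_n\alpha_n(q)z^n$, classifying a partition by the value and number of its least parts to get the recurrence $\alpha_n(q)=\frac{q^n-q^{nM}}{1-q^n}\sum_{s=1}^n\alpha_{n-s}(q)$, and checking by induction that the closed form $\frac{q^{n}-q^{nM}}{1-q^n}\prod_{j=1}^{n-1}\frac{1-q^{jM}}{1-q^j}$ solves it. The identity then follows from part (1), since the two sides count equinumerous sets. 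Your telescoping argument, with antiderivative $\frac{(q^M)_n}{(q)_n}$ and partial sums collapsing to $\prod_{s=1}^{N}\frac{1-q^{Ms}}{1-q^s}\to\prod_{k\not\equiv 0\bmod M}\frac{1}{1-q^k}$, is shorter, entirely analytic, and makes part (2) logically independent of part (1); the coefficient-stabilisation remark disposes of convergence correctly. What it gives up is precisely what the paper is after: a combinatorial reading of each summand as counting partitions subject to a chain of difference conditions, in the spirit of the rest of the paper. For part (1) your base-$M$ digit argument is the standard Glaisher proof; the paper simply cites Glaisher, so nothing is lost there, though you should note that your construction manifestly preserves the weight ($n_w\,w=\sum_i c_i(w)M^i w$), which is worth one explicit line.
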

\begin{proof}
Part (\ref{eulergenbij}). See \cite{Gla83}. \\
Part (\ref{eulergenanid}). The product in the left-hand side is the generating function of the partitions in $\mathcal{C}_N(M)$. The conjugate partition of $\lambda\in \mathcal{D}_N(M)$ is a partition of $N=a_1+\ldots+a_n$ such that
\[
a_1\ge_0^{M-1}\ldots\ge_0^{M-1}a_n\ge_1^{M-1}0,
\]
where $a\ge_r^s b$, for $r,s\in\ZZ_{\ge 0}$ and $r\le s$, means $r\le a-b \le s$. We claim that the second sum is the generating function of these partitions, i.e., of the conjugate partitions of the partitions inside $\mathcal{D}_N(M)$. Hence Part (\ref{eulergenbij}) implies the equality of the generating functions. Now we prove the claim. We use the functional equation method. Let
\[
f(z,q)=\sum_{n=0}^{\infty}\sum_{N=0}^{\infty}g(n,N)q^N z^n =\sum_{n=0}^{\infty}\alpha_n(q)z^n,
\]
where $g(n,N)$ is the cardinality of the set $\mathcal{G}(n,N)$ of the partitions of $N$ with $n$ parts satisfying the conditions $a_1\ge_0^{M-1}\ldots\ge_0^{M-1}a_n\ge_1^{M-1}0.$ Taking the subset $L(s,j;n,N)$ of $\mathcal{G}(n,N)$ of the partitions with $s$ least parts equal to $j$, we have the bijection "delete all the least parts and subtract $j$ to the others" between $L(s,j;n,N)$ and $\mathcal{G}(n-s,N-jn)$.
Hence, we have
\begin{align*}
g(n,N)&=\underbrace{\underbrace{g(n-1,N-n)}_{\text{only $1$ part equals to }1}+\underbrace{g(n-2,N-n)}_{2\text{ parts equal to }1}+\ldots+\underbrace{g(1,N-n)}_{n-1\text{ parts equal to }1}+\underbrace{g(0,N-n)}_{n\text{ parts equal to }1}}_{=\sum_{s=1}^n g(n-s,N-n), \quad \text{least parts equal to }1}+ \\
&+\underbrace{\sum_{s=1}^n g(n-s,N-2n)}_{\text{least parts equal to }2}+\underbrace{\sum_{s=1}^n g(n-s,N-3n)}_{\text{least parts equal to }3}+\ldots+\underbrace{\sum_{s=1}^n g(n-s,N-(M-1)n)}_{\text{least parts equal to }M-1}= \\
&=\sum_{j=1}^{M-1}\sum_{s=1}^n g(n-s,N-jn).
\end{align*}
It follows that
\begin{align*}
f(z,q)&=\sum_{n=0}^{\infty}\sum_{j=1}^{M-1}\sum_{s=1}^n \sum_{N=0}^{\infty} g(n-s,N-jn)q^N z^n=\sum_{n=0}^{\infty}\sum_{j=1}^{M-1}\sum_{s=1}^n \sum_{N=0}^{\infty} g(n-s,N)q^{N+jn} z^n= \\
&=\sum_{n=0}^{\infty}\sum_{j=1}^{M-1}\sum_{s=1}^n \alpha_{n-s}(q)q^{jn} z^n=\sum_{n=0}^{\infty}\sum_{j=1}^{M-1}q^{jn} \sum_{s=1}^n \alpha_{n-s}(q)z^n=\sum_{n=0}^{\infty}\frac{q^n-q^{nM}}{1-q^n} \sum_{s=1}^n \alpha_{n-s}(q)z^n,
\end{align*}
and comparing the coefficients of $z^n$ we get
\begin{equation}\label{receq}
\alpha_n(q)=\frac{q^n-q^{nM}}{1-q^n} \sum_{s=1}^n \alpha_{n-s}(q).
\end{equation}
Using that $\alpha_0(q)=1$, one can prove by induction on $n$ that
\[
\alpha_n(q)=\frac{q^{n}-q^{nM}}{1-q^n}\prod_{j=1}^{n-1}\frac{1-q^{jM}}{1-q^j},
\]
satisfies the recurrence condition (\ref{receq}). Hence, $f(1,q)$ gives the claim.
\end{proof}
\begin{remark}
When $M=2$, we get the classical Euler's bijection and the following identity:
\[
\prod_{k\nequiv 0\bmod 2}\frac{1}{1-q^k}= 1+\sum_{n=1}^{\infty}\frac{q^{\frac{n}{2}(n+1)}}{(q)_n}
= 1+ \sum_{n=1}^{\infty}q^{n}\prod_{j=1}^{n-1}(1+q^{j}),
\]
where the first equality is the classical Euler's identity.
\end{remark}

\section*{Acknowledgements}

I would like to thank Andrea Vietri and Alberto Del Fra for their remarks and suggestions and in particular Stefano Capparelli that introduced me to this topic and gave me many useful suggestions.

\section*{Appendix}\label{sec:oldex}

We write some known results contained in \cite{Cap04}, \cite{Hir79}, \cite{SA88} and \cite{Sub85}, using the notation introduced in Section~\ref{sec:ineq}. We notice that there are bijections between: cases \ref{it:euler} and \ref{it:subagr3}, cases \ref{it:capparelli1} and \ref{it:subagr1}, cases \ref{it:hirschhorn1} and \ref{it:subbarao2}, cases \ref{it:hirschhorn2} and \ref{it:subbarao1}, cases \ref{it:hirschhorn3} and \ref{it:subagr2} and \ref{it:subbarao4}, cases \ref{it:hirschhorn4} and \ref{it:subbarao3}.
\begin{enumerate}[a)]
\item\label{it:euler} A classical result of Euler is: The number of partitions satisfying
\begin{equation}\label{eq:euler}
a_1\ge_1 a_2\ge_1 a_3\ge_1\ldots \ge_1 a_{n-1}\ge_1 a_n\ge_0 1,
\end{equation}
is the same number as the number of partitions whose parts are congruent to $1 \pmod 2$. We get (\ref{eq:euler}) from (\ref{eq:generic partition}) choosing: $\pi(s)=n+1-s,$ and $S(n)=\tfrac{1}{2}(n^2+n).$

\item\label{it:capparelli1} \cite[Theorem 1.6]{Cap04}: The number of partitions satisfying
\begin{align}\label{eq:capparelli1}
a_1 \ge_0 a_2 \ge_1 a_3 \ge_0 a_4 \ge_1  \ldots  \ge_{\parity(m-1)}  a_{m-1} \ge_0 a_m\ge_0 2,
\end{align}
is the same number as the number of partitions whose parts are congruent to \\ $\pm 2,\pm 3,\pm 4,\pm 5, \pm 6, \pm 7 \pmod{20}$.  We get (\ref{eq:capparelli1}) from (\ref{eq:generic partition}) choosing:
\begin{align*}
\text{when } &m=2n \text{ is even} &  \text{when } &m=2n-1 \text{ is odd}  \\
\pi(s)&=\tfrac{1}{2}(2n+4-\parity(s)-s), & \pi(s)&=\begin{cases}
\tfrac{1}{2}(2n+2-\parity(s)-s), & \text{ if } s<2n-1, \\
2, & \text{ if }s=2n-1,
\end{cases} \\
S(n)&=n^2+3n, & S(n)&=n^2+n.
\end{align*}

\item\label{it:capparelli2} \cite[Theorem 1.7]{Cap04}: The number of partitions satisfying
\begin{equation}\label{eq:capparelli2}
a_1 \ge_1 a_2 \ge_0 a_3 \ge_1 a_4 \ge_0  \ldots  \ge_{\parity(m-2)}  a_{m-1} \ge_1 a_m\ge_0 1,
\end{equation}
is the same number as the number of partitions whose parts are congruent to \\ $\pm 1,\pm 3,\pm 5, \pm 7,\pm 8,\pm 9 \pmod{20}$. We get (\ref{eq:capparelli2}) from (\ref{eq:generic partition}) choosing:
\begin{align*}
\text{when } &m=2n \text{ is even} &  \text{when } &m=2n-1 \text{ is odd}  \\
\pi(s)&=\tfrac{1}{2}(2n+2+\parity(s)-s), & \pi(s)&=\begin{cases}
\tfrac{1}{2}(2n+2+\parity(s)-s), & \text{ if }s<2n-1, \\
1, & \text{ if }s=2n-1,
\end{cases} \\
S(n)&=n^2+2n, & S(n)&=n^2+2n-2.
\end{align*}

\item\label{it:hirschhorn1} \cite[Theorem 1]{Hir79}: The number of partitions satisfying
\begin{equation}\label{eq:hirschhorn1}
a_1 \ge_1 a_2 \ge_0 a_3 \ge_1 a_4 \ge_0 \ldots \ge_{\parity(m-1)} a_m\ge_0 1,
\end{equation}
is the same number as the number of partitions whose parts are congruent to \\ $\pm 1,\pm 3,\pm 4, \pm 5, \pm 7, \pm 9 \pmod{20}$. We get (\ref{eq:hirschhorn1}) from (\ref{eq:generic partition}) choosing:
\begin{align*}
\text{when } &m=2n \text{ is even} &  \text{when } &m=2n-1 \text{ is odd}  \\
\pi(s)&=\tfrac{1}{2}(2n+2+\parity(s)-s), & \pi(s)&=\tfrac{1}{2}(2n+\parity(s)-s), \\
S(n)&=n^2+2n, & S(n)&=n^2.
\end{align*}

\item\label{it:hirschhorn2} \cite[Theorem 2]{Hir79}: The number of partitions satisfying
\begin{equation}\label{eq:hirschhorn2}
a_1 \ge_0 a_2 \ge_1 a_3 \ge_0 a_4 \ge_1 \ldots  \ge_{\parity(m)} a_m\ge_0 1,
\end{equation}
is the same number as the number of partitions whose parts are congruent to \\ $\pm 1,\pm 2,\pm 5, \pm 6,\pm 8,\pm 9 \pmod{20}$. We get (\ref{eq:hirschhorn2}) from (\ref{eq:generic partition}) choosing:
\begin{align*}
\text{when } &m=2n \text{ is even} &  \text{when } &m=2n-1 \text{ is odd}  \\
\pi(s)&=\tfrac{1}{2}(2n+2-\parity(s)-s), & \pi(s)&=\tfrac{1}{2}(2n+2-\parity(s)-s), \\
S(n)&=n^2+n, & S(n)&=n^2+n-1.
\end{align*}

\item\label{it:hirschhorn3} \cite[Theorem 3]{Hir79}: The number of partitions satisfying
\begin{equation}\label{eq:hirschhorn3}
a_1 \ge_{2\parity(m-1)} a_2 \ge_{2\parity(m-2)} a_3 \ge_{2\parity(m-3)} \ldots \ge_0  a_{m-1} \ge_2 a_m\ge_0 1,
\end{equation}
is the same number as the number of partitions whose parts are congruent to \\ $\pm 1,\pm 4,\pm 6, \pm 7 \pmod{16}$. We get (\ref{eq:hirschhorn3}) from (\ref{eq:generic partition}) choosing:
\begin{align*}
\text{when } &m=2n \text{ is even} &  \text{when } &m=2n-1 \text{ is odd}  \\
\pi(s)&=2n+1+\parity(s)-s, & \pi(s)&=2n+1-\parity(s)-s, \\
S(n)&=2n^2+2n, & S(n)&=2n^2-1.
\end{align*}

\item\label{it:hirschhorn4} \cite[Theorem 4]{Hir79}: The number of partitions satisfying
\begin{equation}\label{eq:hirschhorn4}
a_1 \ge_{2\parity(m)} a_2 \ge_{2\parity(m-1)} a_3 \ge_{2\parity(m-2)}  \ldots  \ge_2  a_{m-1} \ge_0 a_m\ge_2 0,
\end{equation}
is the same number as the number of partitions whose parts are congruent to \\ $\pm 2,\pm 3,\pm 4,\pm 5 \pmod{16}$. We get (\ref{eq:hirschhorn4}) from (\ref{eq:generic partition}) choosing:
\begin{align*}
\text{when } &m=2n \text{ is even} &  \text{when } &m=2n-1 \text{ is odd}  \\
\pi(s)&=2n+2-\parity(s)-s, & \pi(s)&=2n+\parity(s)-s, \\
S(n)&=2n^2+2n, & S(n)&=2n^2.
\end{align*}

\item\label{it:subagr1} \cite[Theorem 1.4]{SA88}: The number of partitions satisfying
\begin{equation}\label{eq:subagr1}
a_1 \ge_2 a_2 \ge_2 a_3 \ge_2 \ldots \ge_2  a_{\left\lfloor \frac{m}{2}\right\rfloor} \ge_0 \ldots  \ge_0  a_{m-1} \ge_0 a_m\ge_0 2,
\end{equation}
is the same number as the number of partitions whose parts are congruent to \\ $\pm 2,\pm 3,\pm 4,\pm 5, \pm 6, \pm 7 \pmod{20}$. We get (\ref{eq:subagr1}) from (\ref{eq:generic partition}) choosing:
\begin{align*}
\text{when } &m=2n \text{ is even} &  \text{when } &m=2n-1 \text{ is odd}  \\
\pi(s)&=\begin{cases}
2(n+1-s), & \text{ if } s<n, \\
2, & \text{ if } s\ge n,
\end{cases} & \pi(s)&=\begin{cases}
2(n-s), & \text{ if } s<n, \\
2, & \text{ if } s\ge n,
\end{cases} \\
S(n)&=n^2+3n, & S(n)&=n^2+n.
\end{align*}

\item\label{it:subagr2} \cite[Theorem 1.5]{SA88}: The number of partitions satisfying
\begin{equation}\label{eq:subagr2}
a_1 \ge_2 a_2 \ge_2 a_3 \ge_2  \ldots \ge_2 a_n \ge_1 a_{n+1} \ge_{n-1} a_{n+2} \ge_0 \ldots  \ge_0  a_{2n} \ge_0 a_{2n+1}\ge_0 1,
\end{equation}
is the same number as the number of partitions whose parts are congruent to \\ $\pm 1,\pm 4,\pm 6, \pm 7 \pmod{16}$. In this case, we have $m=2n+1$ odd, we get (\ref{eq:subagr2}) from (\ref{eq:generic partition}) choosing:
\[
\pi(s)=\begin{cases}
3n+1-2s, & \text{ if } s< n+1, \\
n, & \text{ if } s= n+1, \\
1, & \text{ if } s> n+1,
\end{cases} \qquad S(n)=2n^2+2n.
\]

\item\label{it:subagr3} \cite[Theorem 1.6]{SA88}: The number of partitions satisfying
\begin{equation}\label{eq:subagr3}
a_1 \ge_2 a_2 \ge_2 a_3 \ge_2 \ldots \ge_2 a_n \ge_0 a_{n+1} \ge_{n-1} a_{n+2} \ge_0 \ldots  \ge_0  a_{2n} \ge_0 a_{2n+1}\ge_0 1,
\end{equation}
is the same number as the number of partitions whose parts are congruent to $1 \pmod 2$. In this case, we have $m=2n+1$ odd, we get (\ref{eq:subagr3}) from (\ref{eq:generic partition}) choosing:
\[
\pi(s)=\begin{cases}
3n-2s, & \text{ if } s< n+1, \\
n, & \text{ if } s= n+1, \\
1, & \text{ if } s> n+1,
\end{cases} \qquad S(n)=2n^2+n.
\]

\item\label{it:subbarao1} \cite[Theorem 2.1]{Sub85}: The number of partitions satisfying
\begin{equation}\label{eq:subbarao1}
a_1 \ge_2 a_2 \ge_2 a_3 \ge_2 \ldots \ge_2  a_{\left\lfloor \frac{m+1}{2}\right\rfloor} \ge_0 \ldots  \ge_0  a_{m-1} \ge_0 a_m\ge_0 1,
\end{equation}
is the same number as the number of partitions whose parts are congruent to \\ $\pm 1, \pm 2, \pm 5,\pm 6, \pm 8, \pm 9 \pmod{20}$. We get (\ref{eq:subbarao1}) from (\ref{eq:generic partition}) choosing:
\begin{align*}
\text{when } &m=2n \text{ is even} &  \text{when } &m=2n-1 \text{ is odd}  \\
\pi(s)&=\begin{cases}
2n+1-2s, & \text{ if } s<n, \\
1, & \text{ if } s\ge n,
\end{cases} & \pi(s)&=\begin{cases}
2n+1-2s, & \text{ if } s<n, \\
1, & \text{ if } s\ge n,
\end{cases} \\
S(n)&=n^2+n, & S(n)&=n^2+n-1.
\end{align*}

\item\label{it:subbarao2} \cite[Theorem 2.2]{Sub85}: The number of partitions satisfying
\begin{equation}\label{eq:subbarao2}
a_1 \ge_2 a_2 \ge_2 a_3 \ge_2 \ldots \ge_2  a_{\left\lfloor \frac{m}{2}\right\rfloor}\ge_1  a_{\left\lfloor \frac{m+2}{2}\right\rfloor} \ge_0 \ldots  \ge_0  a_{m-1} \ge_0 a_m\ge_0 1,
\end{equation}
is the same number as the number of partitions whose parts are congruent to \\ $\pm 1,\pm 3, \pm 4,\pm 5, \pm 7, \pm 9 \pmod{20}$. We get (\ref{eq:subbarao2}) from (\ref{eq:generic partition}) choosing:
\begin{align*}
\text{when } &m=2n \text{ is even} &  \text{when } &m=2n-1 \text{ is odd}  \\
\pi(s)&=\begin{cases}
2n+2-2s, & \text{ if } s\le n, \\
1, & \text{ if } s> n,
\end{cases} & \pi(s)&=\begin{cases}
2n-2s, & \text{ if } s<n, \\
1, & \text{ if } s\ge n,
\end{cases} \\
S(n)&=n^2+2n, & S(n)&=n^2.
\end{align*}

\item\label{it:subbarao3} \cite[Theorem 2.3]{Sub85}: The number of partitions satisfying
\begin{equation}\label{eq:subbarao3}
a_1 \ge_2 a_2 \ge_2 a_3 \ge_2 \ldots \ge_2  a_{\left\lfloor \frac{m}{2}\right\rfloor} \ge_{\left\lfloor \frac{m}{2}\right\rfloor+(-1)^{m-1}}  a_{\left\lfloor \frac{m+2}{2}\right\rfloor} \ge_0 \ldots  \ge_0  a_{m-1} \ge_0 a_m\ge_0 2,
\end{equation}
is the same number as the number of partitions whose parts are congruent to \\ $\pm 2,\pm 3,\pm 4, \pm 5 \pmod{16}$. We get (\ref{eq:subbarao3}) from (\ref{eq:generic partition}) choosing:
\begin{align*}
\text{when } &m=2n \text{ is even} &  \text{when } &m=2n-1 \text{ is odd}  \\
\pi(s)&=\begin{cases}
3n+1-2s, & \text{ if } s\le n, \\
2, & \text{ if } s> n,
\end{cases} & \pi(s)&=\begin{cases}
3n-2s, & \text{ if } s<n, \\
2, & \text{ if } s\ge n,
\end{cases} \\
S(n)&=2n^2+2n, & S(n)&=2n^2.
\end{align*}

\item\label{it:subbarao4} \cite[Theorem 2.4]{Sub85}: The number of partitions satisfying
\begin{equation}\label{eq:subbarao4}
a_1 \ge_2 a_2 \ge_2 \ldots \ge_2  a_{\left\lfloor \frac{m-1}{2}\right\rfloor} \ge_{2-\parity(m)}  a_{\left\lfloor \frac{m+1}{2}\right\rfloor} \ge_{n-1-\parity(m)}  a_{\left\lfloor \frac{m+3}{2}\right\rfloor} \ge_0 \ldots  \ge_0  a_{m-1} \ge_0 a_m\ge_0 2,
\end{equation}
is the same number as the number of partitions whose parts are congruent to \\ $\pm 1,\pm 4,\pm 6, \pm 7 \pmod{16}$. We get (\ref{eq:subbarao4}) from (\ref{eq:generic partition}) choosing:
\begin{align*}
\text{when } &m=2n \text{ is even} &  \text{when } &m=2n-1 \text{ is odd}  \\
\pi(s)&=\begin{cases}
3n+1-2s, & \text{ if } s\le n, \\
2, & \text{ if } s> n,
\end{cases} & \pi(s)&=\begin{cases}
3n-1-2s, & \text{ if } s< n, \\
n, & \text{ if } s= n, \\
2, & \text{ if } s> n,
\end{cases} \\
S(n)&=2n^2+2n, & S(n)&=2n^2-1.
\end{align*}

\end{enumerate}

\end{document}